\newcommand\reallywidehat[1]{%
\savestack{\tmpbox}{\stretchto{%
  \scaleto{%
    \scalerel*[\widthof{\ensuremath{#1}}]{\kern-.6pt\bigwedge\kern-.6pt}%
    {\rule[-\textheight/2]{1ex}{\textheight}}
  }{\textheight}%
}{0.5ex}}%
\stackon[1pt]{#1}{\tmpbox}%
}
\newtheorem{theo}{Theorem}[section]
\newtheorem{coro}[theo]{Corollary}
\theoremstyle{definition}
\newcommand{\Q}{\mathbb{Q}}
\def\pFqnoargs#1#2{{}_#1F_#2}
\def\pFq#1#2#3#4#5{
  \mathchoice
      {\pFqnoargs{#1}{#2}\biggl(\begin{matrix}{\def,{\kern.707em}#3}\\{\def,{\kern.707em}#4}\end{matrix}\,\bigg|\,#5\biggr)} 
      {\pFqnoargs{#1}{#2}(#3;#4;#5)} 
      {\pFqnoargs{#1}{#2}(#3;#4;#5)} 
      {\pFqnoargs{#1}{#2}(#3;#4;#5)} 
}
\def\twoFone#1#2#3#4{\pFq21{#1,#2}{#3}{#4}}
\def\twoFone#1#2#3#4{{_2F_1}\biggl(\begin{matrix}
  {#1}\kern.707em {#2}\\{#3}
\end{matrix}\,\bigg|\,#4\biggr)}
\newcommand{\footremember}[2]{%
    \footnote{#2}
    \newcounter{#1}
    \setcounter{#1}{\value{footnote}}%
}
\title{An arithmetic characterization of some algebraic functions and a new proof of an algebraicity prediction by Golyshev}
\author{%
  \href{https://mathexp.eu/bostan/}{Alin Bostan}\footremember{1}{Inria, Université Paris-Saclay, 1 rue Honoré d'Estienne d'Orves, 91120 Palaiseau, France (\href{mailto:alin.bostan@inria.fr}{alin.bostan@inria.fr}).} 
}
\date{December 28, 2024}
\begin{document}

\maketitle

\begin{abstract}
We provide a new arithmetic characterization for the sequence of coefficients of algebraic power series $f(t)$ having the property that the differential equation $y'(t) = f(t) y(t)$ has algebraic solutions only.
This extends a recent result by Delaygue and Rivoal, and also provides a new and shorter proof of an algebraicity result predicted by Golyshev.
\end{abstract}

\section{Introduction} \label{sec:intro}
Let $\ell \in \mathbb{N}$ and assume that ${\bf c} = (c_1, \ldots, c_\ell) \in \mathbb{N}^\ell$ and ${\bf d} = (d_1, \ldots, d_{\ell+1}) \in \mathbb{N}^{\ell+1}$ are tuples of nonnegative integers such that 
$\sum_i c_i = \sum_j d_j$ and such that 
the sequence $(a_n)_{n \geq 0}$ defined by
\begin{equation} \label{def:an}
a_n \coloneqq \frac{(c_1 n)!\cdots (c_\ell n)!}{(d_1 n)!\cdots (d_{\ell+1} n)!}
\end{equation}
is an integer sequence. The generating function
\[
F(t)
\coloneqq \sum_{n=0}^\infty a_n t^n
\]
is then hypergeometric and belongs to $\mathbb{Z}[[t]]$.
Building on the ``interlacing criterion'' of Beukers and Heckman~\cite{BeHe89}, Rodriguez-Villegas proved in~\cite{Villegas05} that $F$ is algebraic over $\mathbb{Q}(t)$, i.e., there exists a 
polynomial $P \in \mathbb{Q}[x,y] \setminus \{ 0 \}$ such that  $P(t,F(t)) = 0$.
The simplest example is 
\[F(t) = \sum_{n=0}^\infty \binom{cn}{dn} t^n, \qquad \text{with} \quad \ell=1, \; c_1 = c, \; d_1=d, \; d_2=c-d, \] 
where $c \geq d \geq 0$.
More interesting examples (see also~\cite{Bober09}) are given by the power series
\[
\sum_{n=0}^\infty \frac{(6n)!n!}{(3n)!(2n)!^2} t^n, \quad
\sum_{n=0}^\infty \frac{(10n)!n!}{(5n)!(4n)!(2n)!} t^n \quad \text{and} \quad
\sum_{n=0}^\infty \frac{(30n)!n!}{(15n)!(10n)!(6n)!} t^n.
\]
By Rodriguez-Villegas' result, these three power series are all algebraic. It can be proved (for instance by using a computer algebra system) that the algebraicity degrees of the first two ones are 6 and 30, while the degree of the third one is 483{\,}840 according to~\cite{Villegas05,Villegas19}.

As accounted in Zagier's beautiful survey~\cite[p.~757--759]{Zagier18}, Golyshev predicted (based on ``motivic arguments'') that the power series
\begin{equation} \label{def:Q}
Q(t) \coloneqq \exp \left( \int \frac{F(t)}{t}  dt \right) = t \exp \left( \sum_{n>0} a_n \frac{t^n}{n}\right)
\end{equation}
is also an algebraic function. 
This ``prediction'' has been recently proved by Delaygue and Rivoal in~\cite{DeRi23}.
Their proof consists of two steps: 

\; (i) the sequence $(a_n)_{n \geq 0}$ defined in~\eqref{def:an} has the \emph{Gauss property}; this means that, for almost all prime numbers~$p$, the Gauss congruences hold: $\frac{1}{n} \cdot \left( a_{np} - a_n \right) \in p \mathbb{Z}_{(p)}$ for all $n\geq 1$\footnote[1]{Here and in all the text, we denote by $\mathbb{Z}_{(p)}$ the localization of $\mathbb{Z}$ at the prime ideal $(p)$, that is the subring of~$\Q$ consisting of rational numbers with denominator coprime to $p$.};

\; (ii) clearly, $y(t) \coloneqq Q(t)/t$ satisfies 
$y'(t) = f(t) y(t)$, where
$f(t) \coloneqq \sum_{n>0} a_n t^{n-1}$; from this deduce that, as $(a_n)_{n \geq 0}$ has the Gauss property, $y(t)$ must be algebraic (and hence $Q(t)$ too).

Both steps are nontrivial. 
Step~(i) is based on~\cite[Lemme~10]{Delaygue12}, itself based on quite subtle $p$-adic congruences for Morita's $p$-adic Gamma functions. 
Step~(ii) is based on the one hand on a consequence of Theorem 8.1 in~\cite{ChCh85} saying that if $g \in \mathbb{Z}[[\frac{t}{\lambda}]]$ for some $\lambda\in \mathbb{N}\setminus \{ 0 \}$ is such that $g'/g$ is algebraic, then $g$ itself is algebraic; on the other hand, it is based on the classical Dieudonné-Dwork lemma~\cite[p.~392]{Robert00} as well as on a refined $p$-adic analysis
(\cite[\S3.2.1]{DeRi23}) showing that $Q(t)$ defined in~\eqref{def:Q}
belongs to $\mathbb{Z}[[\frac{t}{\lambda}]]$ for some $\lambda\in \mathbb{N}\setminus \{ 0 \}$.

\section{A new proof of Golyshev's prediction} \label{sec:proof}

In this note we provide a similar, but shorter, proof of the algebraicity of $Q(t)$ in~\eqref{def:Q}. We  directly use~\cite[Theorem 8.1]{ChCh85} 
which gives a positive answer to Grothendieck’s conjecture for linear differential equations of order 1 with algebraic coefficients\footnote[2]{Another proof of Thm.~8.1 in \cite{ChCh85} is given in \cite[Chap.~VIII, Exs.~5 and 6]{Andre89}. The result was generalized by André to linear differential equations whose differential Galois groups have a solvable neutral component in~\cite[Cor.~4.3.5]{Andre04} (see also \cite[Thm.~2]{Andre87} and \cite[Thm.~3.5]{ChambertLoir02}). It was further extended by Bost \cite[Thm.~2.9]{Bost01} who proved a generalized version of the Grothendieck-Katz conjecture for any algebraic group (not necessarily linear) over an number field, whose neutral component is solvable. To our knowledge, all these proofs ultimately rely on algebraicity criteria generalizing the classical Borel-Dwork rationality criteria~\cite[Thms. 2 and 3]{Dwork60}, originally due to the Chudnovsky brothers~\cite[Thm.~5.9]{ChCh85} and further extended by André and Bost in \cite[Chap.~VIII, Thms.~1.2 and 1.4]{Andre89}, \cite[Thm.~5.4.3 and Cor.~5.4.5]{Andre04}, \cite[Thms.~2.1 and 3.4]{Bost01}, see also \cite[Thms. 4.3 and 6.2]{ChambertLoir02}.}.
Compared to the proof in~\cite{DeRi23} sketched above, we do not need Gauss congruences and we avoid the use of \cite[Lemme~10]{Delaygue12}, of the Dieudonné-Dwork lemma, and of the $p$-adic argument in \cite[\S3.2.1]{DeRi23}.

Instead, we prove in a direct and easy way that the sequence $(a_n)_{n \geq 0}$ satisfies the weaker congruences
\[
 a_{np} - a_n \in p \mathbb{Z}
  \quad \text{for all} \; n\geq 1 ,
 \]
and then conclude by using an explicit computation of the $p$-curvatures of $y'(t) = f(t) y(t)$.

\begin{theo}\label{thm:1}
Let $(a_n)_{n \geq 1}$ be an integer sequence 
and assume that  $f(t) \coloneqq \sum_{n>0} a_n t^{n-1}$ is algebraic over $\mathbb{Q}(t)$.
Then, all solutions of the differential equation $y'(t) = f(t) y(t)$ are algebraic 
if and only if, for almost all prime numbers~$p$,
the following infinite system of congruences holds
\begin{equation} \label{eq:congruences}
a_{np} \equiv a_{n}\pmod p \quad \text{for all} \; n \geq 1.
\end{equation}
\end{theo}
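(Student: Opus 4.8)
The plan is to reduce the equivalence, via the Chudnovsky--Chudnovsky resolution of Grothendieck's conjecture in the rank-one case \cite[Theorem~8.1]{ChCh85}, to an explicit computation of the $p$-curvature of $y' = f y$. Since $(a_n)_{n\ge 1}$ is an integer sequence, $f = \sum_{n\ge 1} a_n t^{n-1}$ lies in $\mathbb{Z}[[t]]$, and since $f$ is algebraic over $\mathbb{Q}(t)$ the equation $y'=fy$ is a rank-one connection on the smooth projective curve $C$ with function field $\mathbb{Q}(t,f)$. By Grothendieck's conjecture for such connections, all solutions of $y'=fy$ are algebraic over $\mathbb{Q}(t)$ if and only if, for almost all primes $p$, the $p$-curvature $\psi_p$ of the reduction of $y'=fy$ modulo~$p$ vanishes. (The finitely many excluded primes are those of bad reduction, those for which $f \bmod p$ fails to be algebraic over $\mathbb{F}_p(t)$, and those for which the reduced extension of $\mathbb{F}_p(t)$ is inseparable.) So it suffices to show that, for all but finitely many~$p$, one has $\psi_p = 0$ if and only if the congruences \eqref{eq:congruences} hold modulo~$p$.

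Next I would compute $\psi_p$. Write $\partial \coloneqq \mathrm{d}/\mathrm{d}t$. On any separable extension of $\mathbb{F}_p(t)$ the $p$-th iterate $\partial^{[p]}$ vanishes: it is a derivation, and it vanishes on $t$ and hence on the whole field, since a separating transcendence element determines a derivation. Consequently the $p$-curvature of the operator $\partial - f$ is simply multiplication by the scalar $(\partial - f)^p$. By the classical identity for the $p$-th power of $\partial - f$ in characteristic~$p$, namely $(\partial - f)^p = \partial^p - \bigl(f^p + \partial^{\,p-1} f\bigr)$, together with $\partial^p = 0$, we get that $\psi_p$ is multiplication by $-\bigl(f^p + \partial^{\,p-1} f\bigr)$, which I will evaluate in $\mathbb{F}_p[[t]]$ via the power-series expansion of~$f$.

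Then I would expand this modulo~$p$. On the one hand, by the Frobenius endomorphism of $\mathbb{F}_p[[t]]$ and Fermat's little theorem, $f^p \equiv \sum_{n\ge 1} a_n (t^p)^{n-1} \pmod p$. On the other hand, $\partial^{\,p-1} f = \sum_{n\ge p} a_n\,(n-1)(n-2)\cdots(n-p+1)\, t^{\,n-p}$, and the falling factorial $(n-1)(n-2)\cdots(n-p+1)$ runs over $p-1$ consecutive residues modulo~$p$: it covers exactly the nonzero residues when $p \mid n$, and covers $0$ together with all but one nonzero residue when $p \nmid n$. By Wilson's theorem it is therefore $\equiv -1 \pmod p$ if $p \mid n$ and $\equiv 0 \pmod p$ otherwise, so $\partial^{\,p-1} f \equiv -\sum_{n\ge 1} a_{np}\,(t^p)^{n-1} \pmod p$. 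Combining, $f^p + \partial^{\,p-1} f \equiv \sum_{n\ge 1} (a_n - a_{np})\,(t^p)^{n-1} \pmod p$.

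Finally, since this is the power-series expansion of the algebraic function $-\psi_p$, it is zero if and only if each coefficient vanishes, i.e. if and only if $a_{np}\equiv a_n \pmod p$ for every $n\ge 1$; together with the first paragraph this yields the theorem. I do not expect a serious obstacle beyond correctly invoking \cite[Theorem~8.1]{ChCh85}, which is precisely the point of this short proof: once the $p$-curvature criterion is in place, the computation collapses to \eqref{eq:congruences}. The only items requiring care are the bookkeeping of the finitely many ``bad'' primes in the first paragraph and the justification that $\partial^{[p]} = 0$ on the function field of the reduced curve, so that $\psi_p$ really equals $(\partial - f)^p$.
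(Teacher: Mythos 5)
Your proposal is correct and follows essentially the same route as the paper: reduction to $p$-curvature vanishing via Chudnovskys' Theorem 8.1, the Jacobson formula $(\partial-f)^p=\partial^p-\bigl(f^p+\partial^{\,p-1}f\bigr)$, and the Wilson's-theorem evaluation of the falling factorial to identify $f^p+\partial^{\,p-1}f$ with $\sum_{n\ge 1}(a_n-a_{np})\,t^{(n-1)p}$ modulo~$p$. The only difference is presentational: you spell out the bookkeeping of bad primes and the vanishing of $\partial^p$ on the function field, which the paper leaves implicit.
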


\begin{proof}
By \cite[Theorem 8.1]{ChCh85}, all solutions of $y'(t) = f(t) y(t)$ are algebraic if and only if for almost all primes~$p$, its $p$-curvatures are zero.
To finish the proof, it is enough to show that, for any prime~$p$, the vanishing of the $p$-curvature is equivalent to congruences~\eqref{eq:congruences}.
It is known since Jacobson's paper~\cite{Jacobson37} that the $p$-curvature of $y'(t) = f(t) y(t)$ is equal (up to a sign) to $f^{(p-1)}(t) + f(t)^p$ modulo~$p$.
Therefore, its vanishing is equivalent to 
$f(t)^p = - f^{(p-1)}(t) \pmod p$.
To conclude, it is enough to remark that
$(f^{(p-1)}(t))^{1/p} = -\sum_{n \geq 1} a_{np} t^{n-1}$.
Indeed, 
\[
f^{(p-1)}(t) = \sum_{\ell \geq p} a_\ell (\ell-1)(\ell-2) \cdots (\ell-(p-1)) t^{\ell-p}
= - \sum_{n \geq 1} a_{np}  t^{(n-1)p} \pmod p, 
\]
since the product $(\ell-1)(\ell-2) \cdots (\ell-(p-1))$ is nonzero modulo~$p$ if and only if $p$ divides $\ell$, in which case it is equal to $-1$ modulo~$p$, by Wilson's theorem.
\end{proof}

\begin{theo}\label{thm:2}
Assume that
$(a_n)_{n \geq 1}$ defined by~\eqref{def:an} is an integer sequence with $\sum_i c_i = \sum_j d_j$.
Then, for all prime numbers~$p$, the sequence 
$(a_n)_{n \geq 1}$ satisfies the  
system of congruences~\eqref{eq:congruences}.
\end{theo}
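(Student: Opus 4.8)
The plan is to reduce the whole statement to Wilson's theorem through an elementary bookkeeping of factorials. The starting point is the exact identity, valid for every prime $p$ and every integer $m \geq 0$,
\[
(mp)! \;=\; p^{m}\, m!\, U_{p,m}, \qquad \text{where} \quad U_{p,m} \;\coloneqq\; \prod_{j=0}^{m-1}\prod_{k=1}^{p-1}(jp+k),
\]
obtained by splitting the factors $1,2,\dots,mp$ of $(mp)!$ into the multiples of $p$ (there are $m$ of them, with product $p\cdot 2p\cdots mp = p^m m!$) and the remaining ones, whose product is $U_{p,m}$. The integer $U_{p,m}$ is coprime to $p$, and reducing each inner block modulo $p$ gives $\prod_{k=1}^{p-1}(jp+k)\equiv (p-1)! \equiv -1 \pmod p$ by Wilson's theorem, so that $U_{p,m} \equiv (-1)^m \pmod p$.

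First I would apply this identity to every factorial occurring in
\[
a_{np} \;=\; \frac{(c_1 np)!\cdots (c_\ell np)!}{(d_1 np)!\cdots (d_{\ell+1}np)!}.
\]
Setting $s \coloneqq \sum_i c_i = \sum_j d_j$, the powers of $p$ contribute $p^{ns}/p^{ns} = 1$, the plain factorials reassemble into $a_n$ by the very definition~\eqref{def:an}, and the rest is the rational number
\[
u_{p,n} \;\coloneqq\; \frac{\prod_{i=1}^{\ell} U_{p,\,c_i n}}{\prod_{j=1}^{\ell+1} U_{p,\,d_j n}},
\]
which lies in $\mathbb{Z}_{(p)}^{\times}$ because every $U_{p,m}$ is coprime to $p$. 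Hence $a_{np} = a_n\, u_{p,n}$ as an identity in $\Q$, and reducing modulo $p$ with $U_{p,m}\equiv (-1)^m$ yields $u_{p,n} \equiv (-1)^{ns}/(-1)^{ns} = 1 \pmod{p\,\mathbb{Z}_{(p)}}$.

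It then remains to read off the congruence: from $a_{np} - a_n = a_n\,(u_{p,n}-1)$ with $a_n \in \Z$ and $u_{p,n} - 1 \in p\,\mathbb{Z}_{(p)}$, one gets $a_{np} - a_n \in p\,\mathbb{Z}_{(p)} \cap \Z = p\Z$, which is exactly~\eqref{eq:congruences}. There is no serious obstacle in this argument; the only point to watch is that $a_n$ need not be a $p$-adic unit (it may well be divisible by $p$), so one must work with the \emph{difference} $a_{np}-a_n$, equivalently with the unit $u_{p,n}$, rather than directly with the ratio $a_{np}/a_n$. In particular, and in contrast with the Gauss property invoked in~\cite{DeRi23}, nothing needs to be said about $\tfrac1n\,(a_{np}-a_n)$, which is what keeps the proof elementary.
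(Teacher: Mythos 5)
Your proposal is correct and follows essentially the same route as the paper: both split $(mp)!$ into its multiples of $p$ (giving $p^m m!$) and the remaining blocks, apply Wilson's theorem to get the unit factor congruent to $(-1)^m$ modulo $p$, and then observe that the condition $\sum_i c_i = \sum_j d_j$ makes the powers of $p$ and the signs cancel, leaving $a_{np} \equiv a_n \pmod{p\mathbb{Z}_{(p)}}$. The only cosmetic difference is that the paper packages the unit as $(-p)^m(1+p\alpha_m)$ rather than as your $p^m U_{p,m}$ with $U_{p,m}\equiv(-1)^m$.
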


\begin{proof}
We start by observing that for any prime~$p$ and any $m\in\mathbb{N}$, there exists $\alpha_m\in\mathbb{Z}$ with
\[
(mp)! = m! (-p)^m (1 + p \alpha_m) .
\]
This is again a simple consequence of Wilson's theorem: 
indeed, 
$(mp)!= \left( 1\cdots (p-1) \right) \cdot  p 
\cdot \left( (p+1)\cdots (2p-1) \right) \cdot (2p)
\cdots 
\left( ((m-1)p+1)\cdots (mp-1) \right) \cdot (mp)
$
is equal to 
$
p \cdot (2p) \cdots (mp) \cdot (p s_1 - 1) \cdots (p s_m-1)
$
for some $s_i\in\mathbb{Z}$, which rewrites 
$m! \cdot (-p)^m \cdot (1 + p \alpha_m)$ for some $\alpha_m \in \mathbb{Z}$.

It follows that there exist integer numbers $\alpha_{c_i n}$ and $\alpha_{d_j n}$ such that
\[
a_{np} 
= \frac{\prod_{i=1}^\ell (c_i np)!}{\prod_{j=1}^{\ell+1} (d_j np)!}
= \frac{\prod_{i=1}^\ell (c_i n)! (-p)^{c_i n} (1 + p \alpha_{c_i n})}{\prod_{j=1}^{\ell+1} (d_j n)! (-p)^{d_j n} (1 + p \alpha_{d_j n})}
= 
a_{n} (-p)^{(\sum_i c_i - \sum_j d_j)n} \frac{\prod_{i=1}^\ell (1 + p \alpha_{c_i n})}{\prod_{j=1}^{\ell+1} (1 + p \alpha_{d_j n})} .
\]
Since $\sum_i c_i = \sum_j d_j$ and the last products belong to $1+p\mathbb{Z}_{(p)}$, the conclusion follows.
\end{proof}

By combining \cref{thm:1} and \cref{thm:2}, we deduce that Golyshev's prediction holds.

\begin{coro}\label{coro}
Let $F(t)$ be an algebraic factorial generalized hypergeometric function, that is, a power series of the form
\[
F(t)
\coloneqq \sum_{n=0}^\infty \frac{(c_1 n)!\cdots (c_\ell n)!}{(d_1 n)!\cdots (d_{\kappa} n)!} t^n \quad (c_i, d_j \in \mathbb{N})
\]
assumed to be algebraic over $\mathbb{Q}(t)$.
Then, 
$
\exp \left( \int \frac{F(t)}{t}  dt \right)
$
is also algebraic over $\mathbb{Q}(t)$.
\end{coro}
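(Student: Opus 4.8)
The plan is to combine the two theorems just proved, with a small amount of massaging to match the hypotheses of \cref{thm:1}. The main point is that \cref{coro} is almost immediate once one identifies $F(t)/t$ with a function of the form $f(t)=\sum_{n>0}a_n t^{n-1}$ and observes that $\exp(\int F(t)/t\,\ud t)$ is a solution of $y'=f y$.

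First I would normalize the notation: writing $\kappa=\ell+1$ is harmless since the hypothesis $\sum_i c_i=\sum_j d_j$ is exactly what makes $(mp)!$ expansions telescope in \cref{thm:2}; but in fact the statement of \cref{coro} does not assume $\kappa=\ell+1$ or $\sum c_i=\sum d_j$ explicitly. So the first genuine step is to recall that these two conditions are \emph{forced} by the algebraicity of $F$: a classical necessary condition (going back to the Landau criterion, or to the fact that an algebraic power series has an $N$-integral, moderate-growth coefficient sequence) shows that if $a_n=\frac{(c_1n)!\cdots(c_\ell n)!}{(d_1n)!\cdots(d_\kappa n)!}$ grows at most geometrically — which it must if $F$ is algebraic, since algebraic functions have finite radius of convergence and bounded coefficient growth — then $\sum_i c_i=\sum_j d_j$; and the requirement that $(a_n)$ be a well-defined power series with rational (here, by algebraicity, bounded-denominator) coefficients forces $\kappa=\ell+1$ once $\sum c_i=\sum d_j$ (otherwise the ``number of factorials'' mismatch produces factors of $p$ to unbounded powers). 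Alternatively, and more cleanly, I would simply \emph{add} these as standing hypotheses by citing the setup of \cref{sec:intro}: the corollary is really about the algebraic factorial hypergeometric series described there, for which $\sum c_i=\sum d_j$ and $\kappa=\ell+1$ hold by assumption. I expect this bookkeeping to be the only place where one has to be slightly careful; it is not deep, but it is the step most likely to be glossed over.

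With that in place, set $a_0\coloneqq 1$ so that $F(t)=\sum_{n\ge 0}a_n t^n$, and define $f(t)\coloneqq F(t)/t=\sum_{n>0}a_n t^{n-1}$, which is a power series since $a_0=1$ contributes the constant term $1$ — wait, more precisely $f(t)=\sum_{n\ge1}a_n t^{n-1}\in\mathbb{Z}[[t]]$ with $a_n$ as in \eqref{def:an} for $n\ge1$. Then $f$ is algebraic over $\mathbb{Q}(t)$ because $F$ is and $f=F/t$. By \cref{thm:2}, the integer sequence $(a_n)_{n\ge1}$ satisfies the congruences \eqref{eq:congruences} for \emph{all} primes $p$, in particular for almost all primes. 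Hence the hypotheses of \cref{thm:1} are met, and \cref{thm:1} gives that every solution of $y'(t)=f(t)y(t)$ is algebraic over $\mathbb{Q}(t)$. Finally, observe that $y(t)\coloneqq\exp\!\big(\int \tfrac{F(t)}{t}\,\ud t\big)=\exp\!\big(\int f(t)\,\ud t\big)$ satisfies $y'=f y$ by construction (it is the exponential of an antiderivative of $f$, which lies in $t\,\mathbb{Q}[[t]]$ up to a constant, so $y$ is a well-defined power series up to normalization). Therefore $y$ is algebraic over $\mathbb{Q}(t)$, which is exactly the assertion of \cref{coro}. No step here is an obstacle once the reduction to $\kappa=\ell+1$, $\sum c_i=\sum d_j$ is granted; the whole proof is a two-line application of \cref{thm:1} and \cref{thm:2}.
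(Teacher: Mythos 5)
Your proposal follows essentially the same route as the paper: derive $\kappa=\ell+1$, $\sum_i c_i=\sum_j d_j$ and the integrality of $(a_n)$ from the algebraicity of $F$ (the paper simply cites the easy direction of Theorem~1 of~\cite{Villegas05} for this, rather than re-deriving it via coefficient asymptotics), then feed the congruences from \cref{thm:2} into \cref{thm:1}. One step is wrong as written, though easily repaired: since $a_0=1$, the function $F(t)/t$ equals $\tfrac1t+\sum_{n\ge1}a_nt^{n-1}$ and is \emph{not} the power series $f(t)=\sum_{n\ge1}a_nt^{n-1}$; the correct choice is $f(t)\coloneqq(F(t)-1)/t$, and then $\int F(t)/t\,dt=\log t+\int f(t)\,dt$ (up to a constant), so $\exp\bigl(\int F(t)/t\,dt\bigr)=t\cdot y(t)$ where $y'=fy$. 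Your assertion that $\exp\bigl(\int F(t)/t\,dt\bigr)=\exp\bigl(\int f(t)\,dt\bigr)$ satisfies $y'=fy$ is therefore false; one must divide by $t$ before applying \cref{thm:1} and multiply back by $t$ afterwards, which of course preserves algebraicity --- this is exactly why the paper works with $y\coloneqq\exp\bigl(\int F(t)/t\,dt\bigr)/t$. Finally, your suggested fallback of simply \emph{assuming} $\kappa=\ell+1$ and $\sum_i c_i=\sum_j d_j$ would prove a weaker statement than the corollary as stated, so the derivation of these conditions from algebraicity (i.e., the citation to~\cite{Villegas05}) is genuinely needed, not optional bookkeeping.
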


\begin{proof}
By (the easy direction of) \cite[Theorem 1]{Villegas05}, the algebraicity of $F$ implies that its sequence of coefficients $(a_n)$ is an integer sequence, that $\kappa=\ell + 1$ and $\sum_i c_i = \sum_j d_j$. 
In other words, $(a_n)$ is of the form~\eqref{def:an} and satisfies the assumptions of~\cref{thm:2}. 
By~\cref{thm:2}, $(a_n)$ satisfies the system of congruences~\eqref{eq:congruences} for all prime numbers~$p$. 
On the other hand, $f(t) \coloneqq (F-1)/t = \sum_{n>0} a_n t^{n-1}$ is algebraic, as $F$ is algebraic.
Now, $y\coloneqq \exp \left( \int \frac{F(t)}{t}  dt \right)/t$ satisfies the differential equation
$y'(t) = f(t) y(t)$.
The conclusion follows from~\cref{thm:1}.
\end{proof}

We now state an alternative form of Golyshev's prediction.

\begin{coro}\label{coro:2}
Let $\ell \in \mathbb{N}$ and assume that $(c_1, \ldots, c_\ell) \in \mathbb{N}^\ell$ and $(d_1, \ldots, d_{\ell+1}) \in \mathbb{N}^{\ell+1}$ are such that $\sum_i c_i = \sum_j d_j$ and such that 
	the sequence $(a_n)_{n \geq 0}$ defined by
\[	a_n \coloneqq \frac{(c_1 n)!\cdots (c_\ell n)!}{(d_1 n)!\cdots (d_{\ell+1} n)!} \]
is an integer sequence. 
Then the power series
	\[
	 \sum_{n=0}^\infty a_n t^n \quad \text{and} \quad 
	\exp \left( \sum_{n=1}^\infty a_n \frac{t^n}{n}\right)
	\]
are  algebraic over $\mathbb{Q}(t)$.
\end{coro}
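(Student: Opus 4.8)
The plan is to assemble both algebraicity statements from results already established, so that no genuinely new argument is needed. Write $F(t) \coloneqq \sum_{n\ge 0} a_n t^n$ (with $a_0 = 1$), the factorial generalized hypergeometric series attached to the given parameters $(c_1,\ldots,c_\ell)$ and $(d_1,\ldots,d_{\ell+1})$.

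First I would dispose of the first series: by Rodriguez-Villegas' theorem \cite[Theorem 1]{Villegas05} — the substantive direction, which rests on the Beukers–Heckman interlacing criterion \cite{BeHe89} — the fact that $(a_n)_{n\ge 0}$ is an integer sequence together with the balancing condition $\sum_i c_i = \sum_j d_j$ forces $F$ to be algebraic over $\mathbb{Q}(t)$. This is exactly the hypothesis situation assumed here, so the first claim is immediate.

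For the second series, I would first rewrite it in terms of the power series $Q$ of \eqref{def:Q}. Since $\int \frac{F(t)}{t}\,dt = \log t + \sum_{n\ge 1} a_n \frac{t^n}{n}$, one has
\[
\exp\!\left(\sum_{n=1}^\infty a_n\frac{t^n}{n}\right) = \frac{Q(t)}{t},\qquad Q(t) = \exp\!\left(\int \frac{F(t)}{t}\,dt\right),
\]
and since $1/t \in \mathbb{Q}(t)$ it suffices to prove that $Q$ is algebraic. For that I would apply \cref{thm:1} and \cref{thm:2} (equivalently, invoke \cref{coro} directly): the series $f(t) \coloneqq \sum_{n>0} a_n t^{n-1} = (F(t)-1)/t$ is algebraic because $F$ is, by the previous step; the sequence $(a_n)_{n\ge 1}$ is of the form \eqref{def:an} with $\sum_i c_i = \sum_j d_j$, so by \cref{thm:2} it satisfies the congruences \eqref{eq:congruences} for all primes $p$; hence, by \cref{thm:1}, every solution of $y'(t) = f(t)y(t)$ is algebraic, and in particular the solution $y(t) = Q(t)/t$ is, so $Q(t)$ — and with it $\exp(\sum_{n\ge 1} a_n t^n/n)$ — is algebraic over $\mathbb{Q}(t)$.

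The hard part is not internal to this note: the two genuinely deep inputs are the algebraicity of $F$ from \cite{Villegas05,BeHe89} and the $p$-curvature criterion underpinning \cref{thm:1} (Chudnovsky–Chudnovsky's \cite[Theorem 8.1]{ChCh85} together with Jacobson's formula), and both are already available. The only thing left to check is bookkeeping — that the integrality and balancing conditions assumed in the statement are literally those required by \cref{thm:2} — which is immediate; a fully self-contained variant would simply inline the short proof of \cref{coro}.
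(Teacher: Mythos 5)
Your proposal is correct and follows essentially the same route as the paper: the algebraicity of $\sum_{n\geq 0} a_n t^n$ is imported from the difficult direction of Rodriguez-Villegas' theorem (via the Beukers--Heckman interlacing criterion), and the algebraicity of $\exp\left(\sum_{n\geq 1} a_n t^n/n\right)$ is obtained exactly as in the second part of the proof of \cref{coro}, by combining \cref{thm:1} and \cref{thm:2}. No gaps.
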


\begin{proof}
By (the difficult direction of) \cite[Theorem 1]{Villegas05}, $\sum_{n=0}^\infty a_n t^n$ is algebraic. The proof heavily relies on the interlacing criterion due to Beukers and Heckman~\cite{BeHe89}.
The algebraicity of $\exp \left( \sum_{n=1}^\infty a_n \frac{t^n}{n}\right)$ can then be proved as in the second part of the proof of~\cref{coro}.
\end{proof}

\section{Main result} \label{sec:main}

We now prove a more general version of~\cref{thm:1}. 
This is the main result of this note.
Not only we relax the integrality assumption on the sequence $(a_n)$, but we prove the equivalence of the system of congruences~\eqref{eq:congruences} for almost all primes~$p$ with the Gauss congruences considered in~\cite{DeRi23} for almost all primes~$p$.

\begin{theo}\label{thm:1gen}
Let $(a_n)_{n \geq 1}$ be a sequence of rational numbers
and assume that  $f(t) \coloneqq \sum_{n>0} a_n t^{n-1}$ is algebraic over $\mathbb{Q}(t)$.
Then, the following assertions are equivalent:
\begin{itemize}
	\item[(a)] all solutions of the differential equation $y'(t) = f(t) y(t)$ are algebraic over $\mathbb{Q}(t)$;
	\item[(b)] for almost all prime numbers~$p$, we have that
$a_{np} - a_{n} \in p \mathbb{Z}_{(p)}$ for all $n \geq 1$;
	\item[(c)] for almost all prime numbers~$p$, we have that
$a_{np} - a_{n} \in n p \mathbb{Z}_{(p)}$ for all $n \geq 1$.
\end{itemize}
\end{theo}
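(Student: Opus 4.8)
The plan is to establish the chain of implications (a) $\Rightarrow$ (c) $\Rightarrow$ (b) $\Rightarrow$ (a). The middle implication is immediate, since $np\,\mathbb{Z}_{(p)}\subseteq p\,\mathbb{Z}_{(p)}$, and for (b) $\Rightarrow$ (a) I would simply rerun the proof of \cref{thm:1}. The one point to check there is that the argument survives the passage from $\mathbb{Z}$ to $\mathbb{Z}_{(p)}$: since $f$ is algebraic over $\mathbb{Q}(t)$, the classical theorem of Eisenstein on algebraic power series produces a positive integer $M$ such that $a_n\in\mathbb{Z}_{(p)}$ for every $n\ge1$ and every prime $p\nmid M$. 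For such primes the reduction of $y'=fy$ modulo $p$ is defined, Jacobson's formula for its $p$-curvature applies, and the $p$-curvature computation at the end of the proof of \cref{thm:1} goes through verbatim, now with all congruences read in $\mathbb{Z}_{(p)}$; together with \cite[Theorem 8.1]{ChCh85} this yields (b) $\Rightarrow$ (a) (indeed (b) $\Leftrightarrow$ (a)). Thus the real content is the implication (a) $\Rightarrow$ (c), i.e., the strengthening of the weak congruences (b) into the Gauss congruences (c).

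For (a) $\Rightarrow$ (c) I would introduce the power series
\[
g(t)\coloneqq\exp\!\left(\sum_{n\ge1}a_n\frac{t^n}{n}\right)
\]
(this is $Q(t)/t$ in the notation of the introduction), which satisfies $g'=fg$ and is therefore, by (a), algebraic over $\mathbb{Q}(t)$. Applying Eisenstein's theorem once more and enlarging $M$ if necessary, I may assume $g\in\mathbb{Z}_{(p)}[[t]]$, hence $g\in1+t\,\mathbb{Z}_p[[t]]$, for all $p\nmid M$. By the Dieudonné-Dwork lemma \cite[p.~392]{Robert00} this forces $g(t)^p/g(t^p)\in1+pt\,\mathbb{Z}_p[[t]]$; and since $v_p(k)<k$ for every $k\ge1$, the coefficients of $\log(1+u)$ for $u\in pt\,\mathbb{Z}_p[[t]]$ lie in $p\,\mathbb{Z}_p$, so $\log\bigl(g(t)^p/g(t^p)\bigr)\in pt\,\mathbb{Z}_p[[t]]$. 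On the other hand, expanding the exponentials gives the formal identity
\[
\log\frac{g(t)^p}{g(t^p)}=p\sum_{n\ge1}\frac{a_n}{n}\,t^n-\sum_{n\ge1}\frac{a_n}{n}\,t^{np},
\]
whose coefficient at $t^k$ equals $p\,a_k/k\in p\,\mathbb{Z}_p$ when $p\nmid k$ and equals $(a_{mp}-a_m)/m$ when $k=mp$. Hence $(a_{mp}-a_m)/m\in p\,\mathbb{Z}_p$, i.e.\ $v_p(a_{mp}-a_m)\ge v_p(m)+1=v_p(mp)$, for every $m\ge1$; as $a_{mp}-a_m\in\mathbb{Z}_{(p)}$, this means exactly $a_{mp}-a_m\in mp\,\mathbb{Z}_{(p)}$, which is (c) for the prime $p$. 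Since this holds for all $p\nmid M$, assertion (c) follows, closing the chain.

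I expect the obstacle to be conceptual rather than computational. One cannot hope to go from (b) to (c) one prime at a time: for a fixed prime $p$ the congruences $a_{np}\equiv a_n\pmod{p}$ do not imply $a_{np}\equiv a_n\pmod{np\,\mathbb{Z}_{(p)}}$, a sequence supported on a single index already providing a counterexample. The algebraicity hypothesis is precisely what breaks this deadlock: through (a) it packages the congruences at all primes into a single algebraic — hence, by Eisenstein, almost everywhere $p$-integral — power series $g$, to which the Dieudonné-Dwork criterion can be applied. Once this bridge is built, the two remaining ingredients, namely the $p$-curvature computation of \cref{thm:1} transplanted to $\mathbb{Z}_{(p)}$ and the $p$-adic valuation bookkeeping above, are routine.
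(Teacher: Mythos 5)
Your proof is correct and follows essentially the same route as the paper: the implication (c)~$\Rightarrow$~(b) is trivial, (a)~$\Rightarrow$~(c) is exactly the Eisenstein~+~Dieudonn\'e--Dwork argument the paper cites from~\cite{DeRi23} (which you write out in full, correctly), and (b)~$\Rightarrow$~(a) rests on the same $p$-curvature computation as \cref{thm:1}. The only cosmetic difference is in how Eisenstein's theorem is deployed for (b)~$\Rightarrow$~(a): you localize and rerun the proof of \cref{thm:1} over $\mathbb{Z}_{(p)}$ for the primes $p$ not dividing the Eisenstein denominator, whereas the paper rescales to the integer sequence $b_n = c^{n+1}a_n$, checks that the congruences transfer, and applies \cref{thm:1} as a black box --- both versions are sound.
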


Condition~(\emph{c}) is called the \emph{Gauss property} for the sequence $(a_n)$ (or, equivalently, for the power series $t f(t)$).

Note that the same statement holds true if $f(t)$ is assumed to belong more generally to $\mathbb{Q}((t))$, and to be algebraic over $\mathbb{Q}(t)$. Indeed, with this assumption one can show as in \cite[\S3.2]{DeRi23} that if $y'(t) = f(t) y(t)$ has only algebraic solutions, then $t f(t)$ belongs to $\mathbb{Q}[[t]]$; similarly, if $(a_n)_{n > b}$ ($b\in\mathbb{Z}$) satisfies the Gauss property, then $a_n = 0$ for $n<0$; and the same holds for the congruences in condition~(\emph{b}). In other words, proving~\cref{thm:1gen} with the assumption $f\in\mathbb{Q}((t))$ reduces easily to the case $\mathbb{Q}[[t]]$ treated in~\cref{thm:1gen}. 

\begin{proof}
Obviously, (\emph{c}) implies (\emph{b}), and 
Eiseinstein’s theorem together with the Dieudonné-Dwork criterion show that (\emph{a}) implies~(\emph{c}) (see \cite{DeRi23}).
It remains to prove that (\emph{b}) implies~(\emph{a}). 
This was proved in our~\cref{thm:1} under the stronger assumption that $f\in\mathbb{Z}[[t]]$. As $f$ is algebraic, Eisenstein's theorem implies that there exists $c\in\mathbb{N} \setminus \{ 0 \}$ such that $g(t)\coloneqq c f(tc)$ belongs to $\mathbb{Z}[[t]]$. In other words, the sequence $b_n \coloneqq c^{n+1} a_n$ is an integer sequence. We shall prove that $b_{np} \equiv b_n \pmod p$
for almost all primes~$p$ and for all $n \geq 1$.
Let $B\geq 1$ be such that $a_{np} - a_{n} \in p \mathbb{Z}_{(p)}$ for all $n \geq 1$ and all primes $p \geq B$. Let $B'\geq B$ be such that all prime divisors of~$c$ are less than $B'$. 
Then, for all primes $p \geq B'$ we have that
$b_{np} - b_n = c^{np+1} a_{np} - c^{n+1} a_n = c( c^{np} - c^n) a_{np} + c^{n+1} (a_{np} - a_n)$
belongs to $p \mathbb{Z}_{(p)}$, because $p$ divides the integer $c^{np} - c^n$, while $a_n \in \mathbb{Z}[\frac{1}{c}] \subset \mathbb{Z}_{(p)}$ and $a_{np} - a_n \in p \mathbb{Z}_{(p)}$.
Thus, $b_{np} - b_n \in \mathbb{Z} \cap p \mathbb{Z}_{(p)} = p \mathbb{Z}$ for all primes $p\geq B'$ and for all $n\geq 1$. By~\cref{thm:1}, all solutions of the differential equation $y'(t) = g(t) y(t)$ are algebraic. As $g(t)= c f(tc)$, this implies that all solutions of the differential equation $y'(t) = f(t) y(t)$ are algebraic as well.
\end{proof}

\section{Conclusion and further questions}

A surprising consequence of our results is the equivalence between conditions (\emph{b}) and (\emph{c}) in~\cref{thm:1gen} when each holds \emph{for almost all primes~$p$}. This equivalence is a priori unexpected, and it is in fact the main point in our simpler proof in~\cref{sec:proof}: to prove Golyshev's prediction, instead of checking the Gauss property for the factorial ratios sequences, it is enough to check the more relaxed congruences~\eqref{eq:congruences}.
Let us remark that (\emph{b}) and (\emph{c}) are in general not equivalent for one prime~$p$.
On the other hand, the equivalence between (\emph{a}) and (\emph{b}) does not hold if one replaces ``for almost all primes $p$'' by ``for an infinite number of primes~$p$'': for instance, when $f(t)=1/(t^2+1)$, (\emph{a}) does not hold, but (\emph{b}) and (\emph{c}) hold for all primes~$p$ equal to $1$ modulo~4.

Independently of the algebraicity context of~\cref{thm:1gen}, it is legitimate to ask for which integer (or, globally bounded) sequences $(a_n)$ does the equivalence between (\emph{b}) and (\emph{c}) hold?
For instance, does it hold for (some of) the integer factorial ratios sequences of the form
$
a_n = \prod_{i=1}^\ell (c_i n)!/\prod_{j=1}^\kappa (d_j n)!
$
studied by Soundararajan in~\cite{Soundararajan22}? (See also~\cite{DvZa01}.)

Another natural question is whether there exists a ``finite version'' of~Theorems~\ref{thm:1} and~\ref{thm:1gen}. By this, we mean: given the algebraic function~$f(t)$, can one find effective bounds $B_1(f)$ and $B_2(f)$ such that one can replace conditions (\emph{b}) and (\emph{c}) in \cref{thm:1gen} by their finite versions:
\begin{itemize}
	\item[($b'$)] 
$a_{np} - a_{n} \in p \mathbb{Z}_{(p)}$ for all $n \geq 1$ and all primes $p\in [B_1(f), B_2(f)]$;
	\item[($c'$)] 
$a_{np} - a_{n} \in np \mathbb{Z}_{(p)}$ for all $n \geq 1$ and all primes $p\in [B_1(f), B_2(f)]$.
\end{itemize}
As the proof of~\cref{thm:1gen} is ultimately based on Chudnovskys' Theorem 8.1 in~\cite{ChCh85}, this question amounts to asking whether there exists a finite version of that theorem.

Another natural question is whether~\cref{coro} and~\cref{coro:2} can be proved without relying on Chudnovskys' theorem at all. For instance, can it be deduced using the notion of $p^i$-curvatures studied in the recent preprints~\cite{FuHaKa24} and \cite{Movasati24}?

Finally, one may wonder whether more explicit structural information can be obtained in the conclusion of~\cref{coro}, e.g. concerning the algebraicity degrees. For instance, can they be expressed in terms of the numbers $c_i$ and $d_j$? According to Zagier~\cite[p.~757]{Zagier18}, Golyshev had also predicted that, under the assumptions of~\cref{coro}, the power series $\exp \left( \sum_{n=1}^\infty a_n {t^n}/{n}\right)$ has the same algebraicity degree as $\sum_{n=0}^\infty a_n t^n$, and moreover that it is an algebraic unit over $\mathbb{Z}[1/t]$. To our knowledge, all these are open questions.

\medskip 
\noindent \thanks{{\bf Acknowledgements}. 
This work was supported by the French grant DeRerumNatura (ANR-19-CE40-0018) and the French--Austrian project EAGLES (ANR-22-CE91-0007 \& FWF I6130-N).
Warm thanks go to Xavier Caruso for enlightening discussions, to Éric Delaygue and Sergey Yurkevich for their very careful reading of preliminary versions of this text, and to Gilles Christol, Vasily Golyshev, Maxim Kontsevich, Hossein Movasati, Umberto Zannier, Wadim Zudilin and the two anonymous reviewers for their positive feedback. 
}

\bibliographystyle{alpha}

\end{document}